\newfont{\bb}{msbm10}
\def\Bbb#1{\mbox{\bb #1}}
\def\di{\mathop{\diamond}}
\def\R{ {\Bbb R} }
\def\Nul{\mathop{\rm Nul}}
\def\rank{\mathop{\rm rank}}
\def\n2{\frac{n}{2}}
\def\L{\lambda}
\def\vecone{{\it 1}}
\def\bp{{\rm bp}}
\def\t{^\top}
\def\vform{\it }
\def\vx{{\vform x}}
\def\vy{{\vform y}}
\def\v0{{\vform 0}}
  \newcommand{\D}{\mathcal{D}}
\newtheorem{theorem}{Theorem}[section]
\newtheorem{proposition}[theorem]{Proposition}
\newtheorem{lemma}[theorem]{Lemma}
\newtheorem{corollary}[theorem]{Corollary}
\newtheorem{example}[theorem]{Example}
\newtheorem{remark}[theorem]{Remark}
\newtheorem{question}[theorem]{Question}
\begin{document}

\tikzstyle{place}=[draw,circle,minimum size=0.5mm,inner sep=1pt,outer sep=-1.1pt,fill=black]

\title{Addressing Graph Products and Distance-Regular Graphs}
\author{Sebastian M. Cioab\u{a}\footnote{Department of Mathematical Sciences, University of Delaware, Newark, DE 19716-2553, USA; {\tt cioaba@udel.edu}. Research partially supported by NSF grant DMS-1600768.}\, , Randall J. Elzinga\footnote{Department of Mathematics, Royal Military College, Kingston, ON K7K 7B4, Canada; {\tt rjelzinga@gmail.com}. Current address: Info-Tech Research Group, London, ON, N6B 1Y8, Canada.}\, , \\Michelle Markiewitz\footnote{Department of Mathematical Sciences, University of Delaware, Newark, DE 19716-2553, USA; {\tt mmark@udel.edu}. Research supported by the Summer Scholars Undergraduate Program at the University of Delaware.}\, , Kevin Vander Meulen\footnote{Department of Mathematics, Redeemer University College, Ancaster, ON L9K 1J4, Canada; {\tt kvanderm@redeemer.ca}. Research supported
by NSERC Discovery Grant 203336.}\, , and
Trevor Vanderwoerd\footnote{Department of Mathematics, Redeemer University College, Ancaster, ON L9K 1J4, Canada; {\tt tvanderwoerd@redeemer.ca.}}}
\date{\today}
\maketitle


\begin{abstract}
Graham and Pollak showed that the vertices of any connected graph $G$ can be assigned $t$-tuples with entries in $\{0, a, b\}$, called addresses, such that the distance in $G$ between any two vertices equals the number of positions in their addresses where one of the addresses equals $a$ and the other equals $b$. In this paper, we are interested in determining the minimum value of such $t$ 
for various families of graphs. We develop two ways to obtain this value for the
Hamming graphs and present a lower bound for the triangular graphs. 

Keywords: {distance matrix, spectrum, triangular graphs, Hamming graphs, graph addressing}.

\end{abstract}

\section{Graph Addressings}

A $t$-{\em address} is a $t$-tuple with entries in $\{0,a,b\}$. 
An {\em addressing} of length $t$ for a graph $G$ is an assignment of $t$-addresses to the vertices of $G$ so that the distance between two vertices is equal to the number of locations in the addresses at which one of the addresses equals $a$ and the other address equals $b$. For example, we have a $3$-addressing of a graph in Figure~\ref{sample}. Graham and Pollak \cite{GP1} introduced such addressings, using symbols $\{*, 0,1\}$ instead $\{0,a,b\}$, in the context of loop switching networks.

\begin{figure}[ht]
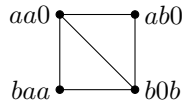

\begin{center}$
\tikzpicture 
\node (1) at (-0.5,0.5)[place] {};
\node (2) at (0.5,0.5)[place] {};
\node (3) at (0.5,-0.5)[place] {};
\node (4) at (-0.5,-0.5)[place] {};
\draw [right] (3) to (4);
\draw [right] (1) to (2);
\draw [right] (2) to (3);
\draw [right] (3) to (1);
\draw [right] (1) to (4);
\node [right] at (2) {$ab0$};
\node [right] at (3) {$b0b$};
\node [left] at (4) {$baa$};
\node [left] at (1) {$aa0$};
\endtikzpicture$\end{center}
\caption{A graph addressing}\label{sample}
\end{figure}

We are interested in the minimum $t$ such that $G$ has an addressing of length $t$. We denote
such a mininum by $N(G)$. Graham and Pollak \cite{GP1,GP2} showed that $N(G)$ equals the
biclique partition number of the distance multigraph of $G$. Specifically, 
the \emph{distance multigraph of $G$}, $\D (G)$, is the multigraph with the same vertex set as $G$ where the multiplicity of any edge $uv$ equals the distance in $G$ between vertices $u$ and $v$. The \emph{biclique partition number} $\bp(H)$ of a multigraph $H$ is the minimum number of complete bipartite subgraphs (bicliques) of $H$ whose edges partition the edge-set of $H$. This parameter and its close covering variations have been studied by several researchers and appear in different contexts such as computational complexity or geometry (see  
 for example, \cite{CT,GP1,GP2,GSW,H,KRW,RSV,Zaks}). Graham and Pollak deduced that $N(G)\leq r(n-1)$ for any connected $G$ of order $n$ and diameter $r$ and conjectured that $N(G)\leq n-1$ for any connected graph $G$ of order $n$. This conjecture, also known as the {\em squashed cube conjecture}, was proved by Winkler \cite{Winkler}.

To bound $N(G)$ below, Graham and Pollak used an eigenvalue argument on the adjacency matrix of
$\mathcal{D}(G)$. Specifically, if $M$ is a symmetric real matrix, 
let  $n_{+}(M), n_{-}(M),$ and  $n_0(M)$ denote the number of eigenvalues of $M$ (including multiplicity) that are positive, negative and zero, respectively. The \emph{inertia} of $M$ is the triple $(n_{+}(M), n_{0}(M), n_{-}(M))$.
The adjacency matrix of $\mathcal{D}(G)$ 
will be denoted by $D(G)$; we will also refer to $D(G)$ as the distance matrix of $G$.
The inertia of distance matrices has been studied by various authors for many classes of graphs \cite{AP,I,KS,ZG}.
Witsenhausen (cf. \cite{GP1,GP2}) showed that 
\begin{equation}\label{Witsenhausen_ineq}
N(G)\geq \max(n_+(D(G)),n_{-}(D(G))).
\end{equation}
Letting $J_n$ denotes the all one $n\times n$ matrix and $I_n$ denotes the $n\times n$ identity matrix, 
and noting that $n_{-}(D(K_n))= n_{-}(J_n-I_n)$,  
Graham and Pollak \cite{GP1,GP2} used the bound (\ref{Witsenhausen_ineq}) 
to conclude that 
\begin{equation}\label{GPn}
N(K_n)=n-1.
\end{equation}

Graham and Pollak \cite{GP1,GP2} also determined $N(K_{n,m})$ for many values of $n$ and $m$. The determination of $N(K_{n,m})$ for all values of $n$ and $m$ was completed by Fujii and Sawa~\cite{FS}. A more general addressing scheme, allowing the addresses to contain more than two different nonzero symbols, was recently studied by Watanabe, Ishii and Sawa~\cite{WIS}. The parameter $N(G)$ has been determined when $G$ is a tree or a cycle \cite{GP2}, as well as one particular triangular graph $T_4$ \cite{Zaks}, described in Section~\ref{triangleS}. For the Petersen graph $P$, Elzinga, Gregory and Vander Meulen \cite{EGV} showed that $N(P)=6$. To the best of our knowledge, these are the only graphs $G$ for which addressings 
of length $N(G)$ have been determined. We will say a $t$-addressing of $G$ is \emph{optimal}
if $t=N(G)$. An addressing is \emph{eigensharp} \cite{KRW} if equality is obtained in (\ref{Witsenhausen_ineq}).

In this paper, we study optimal addressings of Cartesian graph products and the distance-regular graphs known as
triangular graphs. Let $H(n,q)$ is the Hamming graph 
whose vertices are the $n$-tuples over an alphabet with $q$ letters with two $n$-tuples being adjacent if and only if their Hamming distance is $1$. We give two different proofs showing that $N(H(n,q))=n(q-1)$. 
This generalizes the Graham-Pollak result (\ref{GPn}) since $H(1,q)=K_q$. We determine
that the triangular graphs are not eigensharp.

\section{Addressing Cartesian Products}\label{CP}
Suppose that $G_i, i=1,\ldots,k$ are graphs and that each graph $G_i$ has vertex set $V(G_i)$
and order $n_i=|V(G_i)|$.
The {\em Cartesian product} $G_1\Box G_2 \Box \cdots \Box G_k$ of  $G_1, G_2, \ldots, G_k$
is the graph
with vertex set $V(G_1)\times V(G_2) \times \cdots \times V(G_k)$,
order $n=n_1 n_2 \cdots n_k$, and with
vertices $\vx=(x_1,\ldots,x_k)$ and $\vy=(y_1,\ldots,y_k)$
adjacent if for some index $j$, $x_j$ is adjacent to $y_j$ in $G_j$
while $x_i=y_i$ for all remaining indices $i\ne j$.
Thus, if $d$ and $d_i$ denote distances in $G$ and $G_i$, respectively,
then
\begin{eqnarray}\label{eq:addpropdiam}
d(\vx,\vy) = \sum_{i=1}^k d_i(x_i,y_i)
\end{eqnarray}

It follows that if each $G_i$, $i=1,\ldots,k$ is given an addressing,
then each vertex $\vx$ of $G$ may be addressed
by concatenating the addresses of its components $x_i$.
Therefore, the parameter $N$ is subadditive on Cartesian products;  that is, if
\begin{eqnarray}\label{eq:cartprodG}
 G=G_1 \Box \cdots \Box G_k
\end{eqnarray}
then
\begin{eqnarray} \label{in:subaddN}
N(G) \le N(G_1) + \cdots + N(G_k)
\end{eqnarray}
Note that $N(G_1) + \cdots + N(G_k)\leq \left( \sum_{i=1}^k n_i\right) -k \leq \left( \prod_{i=1}^kn_i \right) -1 =n-1$. Thus (\ref{in:subaddN})
can improve on Winkler's upper bound of $n-1$ when $G$ is a Cartesian product. 
\begin{question}\label{Qequality}
 Must equality holds in (\ref{in:subaddN})
for all choices of $G_i$? {\rm Remark \ref{rem:countereg} might provide a possible counterexample.}
\end{question}

\section{Distance Matrices of Cartesian Products}\label{CPD}

If $v_1, \ldots, v_n$ denote the vertices of a connected graph $G$, the distance matrix $D(G)$ of $G$ is the $n\times n$ matrix with entries $D(G)_{ij}=d(v_i,v_j)$.  Because $G$ is connected,
its adjacency matrix $A(G)$ and its distance matrix $D(G)$ are irreducible symmetric nonnegative integer matrices and by the Perron-Frobenius Theorem (see \cite[Proposition 3.1.1]{BHBook} or \cite[Theorem 8.8.1]{GR}), the largest eigenvalue 
of each of these matrices has multiplicity 1. We call this largest eigenvalue
the \emph{Perron} value of the matrix and often denote it by $\rho$.

To obtain a formula for the distance matrix of a Cartesian product of graphs, we will use an additive analogue of the Kronecker product of matrices. Recall first that if $A$ is an $n \times n$ matrix
and  $B$ an $m\times m$ matrix, with $x\in\R^n$,  $y\in \R^m$, then
the Kronecker products $A\otimes B$ and $\vx\otimes\vy$  are defined by
\begin{equation}\label{usualKronecker}
A\otimes B=\left[\begin{array}{cccc}
a_{11}B&a_{12}B&\cdots&a_{1m}B\\
a_{21}B&a_{22}B&\cdots&a_{2m}B\\
\vdots&\vdots& &\vdots\\
a_{m1}B&a_{m2}B&\cdots&a_{mm}B\\
\end{array}\right] \qquad
\rm{ and}\qquad
\vx\otimes\vy=\left[
\begin{array}{c}
x_1y\\
x_2y\\
\vdots\\
x_ny\\
\end{array}
\right]
\end{equation}
For the additive analogue,
we use the symbol $\di$ and define $A\di B$ and $\vx \di \vy$ by
\begin{equation}
A\di B=\left[\begin{array}{cccc}
a_{11}+B&a_{12}+B&\cdots&a_{1m}+B\\
a_{21}+B&a_{22}+B&\cdots&a_{2m}+B\\
\vdots&\vdots& &\vdots\\
a_{m1}+B&a_{m2}+B&\cdots&a_{mm}+B\\
\end{array}\right] \qquad
\rm{ and}\qquad
\vx\di\vy=\left[
\begin{array}{c}
x_1+y\\
x_2+y\\
\vdots\\
x_n+y\\
\end{array}
\right]
\end{equation}
If  $G=G_1\Box G_2 \Box \cdots \Box G_k$,
then the additive property (\ref{eq:addpropdiam}) implies that\footnote{This approach 
was suggested by the late David A. Gregory.}
\begin{eqnarray} \label{eq:diamondprodD}
D(G)=D(G_1)\di D(G_2) \di \cdots \di D(G_k)
\end{eqnarray}
Observe that 
\begin{equation}\label{eqn:diamond}
A\di B = A\otimes J_m + J_n \otimes B \ \mbox{ and } \ \vx\di \vy = \vx\otimes \vecone_m + \vecone_n \otimes \vy
\end{equation}
where $\vecone_n\in \R^n$ denotes the column vector whose entries are all one.
Let $\v0_n\in\R^n$ denote the column vector with all zero entries.  The following two lemmas are
 due to D.A. Gregory.

\begin{lemma} \label{lem:eigvec}
Let $A$ and $B$ be $n\times n$ and $m\times m$ real matrices respectively.
If $A \vx=\L\vx$  and $\vx\t  \vecone_n = \sum x_i = 0$,
then $(A\di B)(\vx\di \v0_m) = m\L (\vx\di\v0_m)$. 
Also, if $\vecone_m^Ty=0$ and $By=\mu y$,
then $(A\di B)(\v0_n\di \vy) = n\mu (\v0_n\di\vy)$.
\end{lemma}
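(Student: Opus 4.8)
The plan is to verify the eigenvalue equation directly using the identity in \eqref{eqn:diamond}, which expresses the diamond product in terms of ordinary Kronecker products. The key observation is that the hypothesis $\vx\t\vecone_n=0$ is exactly what makes the ``mixing'' terms involving $J_n$ vanish, leaving a clean multiple of the original eigenvector.

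First I would write $A\di B = A\otimes J_m + J_n\otimes B$ from \eqref{eqn:diamond}, and similarly express the candidate eigenvector as $\vx\di\v0_m = \vx\otimes\vecone_m + \vecone_n\otimes\v0_m = \vx\otimes\vecone_m$, using $\v0_m=\v0_m$. Then I would apply the mixed-product rule for Kronecker products, namely $(P\otimes Q)(\vu\otimes\vv)=(P\vu)\otimes(Q\vv)$, to each of the two summands. The first summand gives $(A\otimes J_m)(\vx\otimes\vecone_m)=(A\vx)\otimes(J_m\vecone_m)=(\L\vx)\otimes(m\vecone_m)=m\L(\vx\otimes\vecone_m)$, which is precisely $m\L(\vx\di\v0_m)$. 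The second summand gives $(J_n\otimes B)(\vx\otimes\vecone_m)=(J_n\vx)\otimes(B\vecone_m)$, and here the hypothesis enters: since $J_n\vx=(\vecone_n\vecone_n\t)\vx=\vecone_n(\vecone_n\t\vx)=\v0_n$ because $\vecone_n\t\vx=\sum x_i=0$, this entire term vanishes. Adding the two summands yields $(A\di B)(\vx\di\v0_m)=m\L(\vx\di\v0_m)$, as desired.

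For the second statement I would proceed symmetrically. Writing $\v0_n\di\vy=\v0_n\otimes\vecone_m+\vecone_n\otimes\vy=\vecone_n\otimes\vy$ and applying the same expansion $A\di B=A\otimes J_m+J_n\otimes B$, the term $(A\otimes J_m)(\vecone_n\otimes\vy)=(A\vecone_n)\otimes(J_m\vy)$ now vanishes because $J_m\vy=\vecone_m(\vecone_m\t\vy)=\v0_m$ by the hypothesis $\vecone_m\t\vy=0$, while $(J_n\otimes B)(\vecone_n\otimes\vy)=(J_n\vecone_n)\otimes(B\vy)=(n\vecone_n)\otimes(\mu\vy)=n\mu(\vecone_n\otimes\vy)$, giving $n\mu(\v0_n\di\vy)$.

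I do not anticipate a genuine obstacle here, since the proof is a direct computation once the Kronecker reformulation \eqref{eqn:diamond} is in hand; the only point requiring care is bookkeeping the zero-vector additive identities (for instance that $\vx\di\v0_m$ reduces cleanly to $\vx\otimes\vecone_m$) and invoking the mixed-product rule correctly. The conceptual heart of the lemma is simply that orthogonality to $\vecone$ annihilates the all-ones factor $J$, so I would make sure to state that step explicitly rather than bury it in the algebra.
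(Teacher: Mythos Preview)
Your proof is correct and follows essentially the same approach as the paper: expand $A\di B$ via \eqref{eqn:diamond}, apply the mixed-product rule for Kronecker products, and use $\vecone_n\t\vx=0$ to kill the $J_n\vx$ term. Your only difference is cosmetic---you simplify $\vx\di\v0_m$ to $\vx\otimes\vecone_m$ before multiplying, whereas the paper carries the redundant summand $\vecone_n\otimes\v0_m$ through the computation---and you spell out the symmetric second case explicitly where the paper simply says ``a similar argument works.''
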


\begin{proof} We use properties of Kronecker products: 
\begin{eqnarray*}
(A\di B )(x\di \v0) &=& (A\otimes J_m + J_n\otimes B)(x\otimes \vecone_m+ \vecone_n\otimes \v0)\\
 &=& Ax\otimes J_m\vecone_m + J_nx\otimes B\vecone_m+ A\vecone_n\otimes J_m0 +J_n\otimes B\v0\\
 &=& Ax\otimes J_m\vecone_m = \lambda m (x\otimes \vecone_m)\\
 &=& \lambda m (x\otimes \vecone_m + \vecone_n \otimes \v0) = \lambda m (x \di \v0)\vspace{-1em}
\end{eqnarray*} 
A similar argument work the vector $(\v0_n\di \vy)$.
\end{proof}

Throughout we will say a square matrix is $k$-\emph{regular} if it has constant row sum $k$.
\begin{lemma}\label{lem:Perron}
If $A$ is $\rho_A$-regular and
$B$ is $\rho_B$-regular
then $A\di B$ is ($m\rho_A+n\rho_B$)-regular.
\end{lemma}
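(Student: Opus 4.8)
The plan is to verify directly that every row of $A \di B$ sums to $m\rho_A + n\rho_B$, using the decomposition $A \di B = A \otimes J_m + J_n \otimes B$ from (\ref{eqn:diamond}). First I would recall that a square matrix $M$ is $k$-regular precisely when $M \vecone = k \vecone$, so the goal is to show that $\vecone_{nm}$ is an eigenvector of $A \di B$ with eigenvalue $m\rho_A + n\rho_B$. The natural observation is that the all-ones vector on the product space factors as a Kronecker product, namely $\vecone_{nm} = \vecone_n \otimes \vecone_m$, which matches the block structure of the two summands.

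The key computation is then to apply each summand to this vector separately and add. Using the mixed-product rule $(P \otimes Q)(\vu \otimes \vv) = (P\vu)\otimes(Q\vv)$, the first term gives
\begin{equation*}
(A \otimes J_m)(\vecone_n \otimes \vecone_m) = (A\vecone_n)\otimes(J_m \vecone_m) = (\rho_A \vecone_n)\otimes(m\vecone_m) = m\rho_A\,(\vecone_n \otimes \vecone_m),
\end{equation*}
where I have used that $A$ is $\rho_A$-regular so $A\vecone_n = \rho_A \vecone_n$, and that $J_m \vecone_m = m\vecone_m$. Symmetrically, the second term gives $(J_n \otimes B)(\vecone_n \otimes \vecone_m) = n\rho_B\,(\vecone_n \otimes \vecone_m)$. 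Adding the two contributions yields $(A \di B)\vecone_{nm} = (m\rho_A + n\rho_B)\vecone_{nm}$, which is exactly the claim that $A \di B$ is $(m\rho_A + n\rho_B)$-regular.

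I do not anticipate a genuine obstacle here, since the argument is a short application of the Kronecker mixed-product identity already invoked in the proof of Lemma~\ref{lem:eigvec}. The only point requiring a little care is the bookkeeping of dimensions: the block form of $A \di B$ in (\ref{eqn:diamond}) treats $A$ as the outer ($n\times n$) factor and $B$ as the inner ($m \times m$) factor, so one must be sure to pair $A$ with $\vecone_n$ and $B$ with $\vecone_m$ rather than swapping them, and to track which factor produces the scalar $m$ versus $n$ from the all-ones block $J_m$ or $J_n$. Once the vector $\vecone_{nm}$ is correctly factored as $\vecone_n \otimes \vecone_m$, the result follows immediately.
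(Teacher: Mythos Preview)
Your proof is correct and follows essentially the same route as the paper: both apply the decomposition $A\di B = A\otimes J_m + J_n\otimes B$ to the vector $\vecone_n\otimes\vecone_m$ via the Kronecker mixed-product rule and read off the eigenvalue $m\rho_A+n\rho_B$. There is nothing to add.
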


\begin{proof} Using properties of Kronecker products,
\begin{eqnarray*}
(A\di B )(\vecone_n\otimes \vecone_m) &=& (A\otimes J_m + J_n\otimes B)(\vecone_n\otimes \vecone_m)\\
 &=& A\vecone_n \otimes J_m\vecone_m + J_n\vecone_n \otimes B\vecone_m)\\
 &=& \rho_Am (\vecone_n\otimes \vecone_m) + n\rho_B(\vecone_n\otimes \vecone_m)\\
 &=& (\rho_Am+n\rho_B)(\vecone_n\otimes \vecone_m).\vspace{-1em}
\end{eqnarray*}
Thus $(A\di B)\vecone = (\rho_Am+n\rho_B) \vecone$.
\end{proof}

\begin{lemma} \label{cor:supaddinertia}
If  $G=G_1\Box G_2 \Box \cdots \Box G_k$ and each $\D(G_i), i=1,\ldots,k$
is regular then
\begin{enumerate}
  \item[(a)] $n_-(D(G)) \ge \sum_i n_-(D(G_i))$, and
  \item[(b)] $n_+(D(G)) \ge 1+\sum_i (n_+(D(G_i)) -1)$.
\end{enumerate}
\end{lemma}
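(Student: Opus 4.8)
The plan is to produce eigenvectors of $D(G)$ of each sign by lifting eigenvectors of the individual factors through the diamond product, and then to count them, using a tensor-space decomposition to guarantee their linear independence.

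First I would record what regularity of $\D(G_i)$ gives. It means $D(G_i)$ has constant row sum, so $\vecone_{n_i}$ is an eigenvector; since $G_i$ is connected and (without loss of generality) nontrivial, $D(G_i)$ is irreducible nonnegative, so by Perron--Frobenius this row sum is the Perron value $\rho_i>0$ and spans a one-dimensional eigenspace. Hence every remaining eigenvector of $D(G_i)$ may be chosen orthogonal to $\vecone_{n_i}$, i.e.\ in $W_i:=\vecone_{n_i}^\perp$. In particular all $n_-(D(G_i))$ negative eigenvalues, and exactly $n_+(D(G_i))-1$ of the positive ones (all but $\rho_i$), are attained on $W_i$.

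Next I would lift these eigenvectors. Writing out $D(G)=D(G_1)\di\cdots\di D(G_k)$ with (\ref{eqn:diamond}) gives $D(G)=\sum_{i=1}^k J_{n_1}\otimes\cdots\otimes D(G_i)\otimes\cdots\otimes J_{n_k}$, so, exactly as in Lemma~\ref{lem:eigvec}, an eigenvector $\vx^{(i)}\in W_i$ with $D(G_i)\vx^{(i)}=\L\vx^{(i)}$ yields
\[
\vecone_{n_1}\otimes\cdots\otimes\vx^{(i)}\otimes\cdots\otimes\vecone_{n_k},
\]
an eigenvector of $D(G)$ with eigenvalue $\L\prod_{j\ne i}n_j$: the relation $J_{n_i}\vx^{(i)}=0$ kills every summand except the $i$-th, which scales by $\prod_{j\ne i}n_j$. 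Because $\prod_{j\ne i}n_j>0$, this lift preserves the sign of $\L$, so negative (resp.\ positive) eigenvalues on $W_i$ lift to negative (resp.\ positive) eigenvalues of $D(G)$. Separately, Lemma~\ref{lem:Perron} shows $D(G)$ is regular with a positive Perron value, so $\vecone_n=\vecone_{n_1}\otimes\cdots\otimes\vecone_{n_k}$ supplies one further positive eigenvalue.

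The crux is the linear independence of the whole family, and this is where I would spend the effort. I would use the decomposition
\[
\R^n=\bigotimes_{i=1}^k\bigl(\langle\vecone_{n_i}\rangle\oplus W_i\bigr)=\bigoplus_{S\subseteq\{1,\dots,k\}}\Bigl(\bigotimes_{i\in S}W_i\Bigr)\otimes\Bigl(\bigotimes_{i\notin S}\langle\vecone_{n_i}\rangle\Bigr).
\]
All vectors lifted from factor $i$ lie in the summand indexed by $S=\{i\}$, while $\vecone_n$ lies in the summand $S=\emptyset$; since the summands are in direct sum, lifts from distinct factors and the global Perron vector are automatically independent, and within a single factor an eigenbasis of $D(G_i)$ restricted to $W_i$ lifts to independent vectors. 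Counting the negative lifts then yields $n_-(D(G))\ge\sum_i n_-(D(G_i))$, giving (a), and counting the positive lifts together with $\vecone_n$ yields $n_+(D(G))\ge 1+\sum_i(n_+(D(G_i))-1)$, giving (b). The only genuine obstacle is this independence bookkeeping; the tensor direct-sum decomposition disposes of it, and the remaining steps are direct applications of the two preceding lemmas.
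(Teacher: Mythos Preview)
Your proof is correct and follows essentially the same approach as the paper's: both lift eigenvectors of $D(G_i)$ orthogonal to $\vecone_{n_i}$ to eigenvectors of $D(G)$ with eigenvalue scaled by $\prod_{j\ne i}n_j$ (hence of the same sign), add the Perron vector $\vecone_n$ for part (b), and then argue independence of the full collection. The only expository difference is that the paper invokes Lemma~\ref{lem:eigvec} with $A=D(G_i)$, $B=D(\Box_{j\ne i}G_j)$ and asserts pairwise orthogonality of the lifts directly, whereas you write out the $k$-fold expansion of the diamond product and package the independence via the orthogonal tensor decomposition $\R^n=\bigoplus_S\bigl(\bigotimes_{i\in S}W_i\bigr)\otimes\bigl(\bigotimes_{i\notin S}\langle\vecone_{n_i}\rangle\bigr)$; these are the same fact viewed two ways.
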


\begin{proof}
Because $\D(G_i)$  regular,
$D(G_i)\vecone_{n_i}=\rho_i \vecone_{n_i}$ where $\rho_i$ is the Perron value of $D(G_i)$.
Then $\vecone_{n_i}$
is a $\rho_i$-eigenvector of $D(G_i)$ and $\R^{n_i}$ has an orthogonal
basis of eigenvectors of $D(G_i)$ that includes $\vecone_{n_i}$ as a member.
Thus,  Lemma~\ref{lem:eigvec}
with $A=D(G_i)$ and $B=D(\Box_{j\ne i}G_j)$ implies that
the $n_i-1$ eigenvectors of $D(G_i)$ in the basis other than $\vecone_{n_i}$
contribute $n_i-1$ orthogonal eigenvectors to the matrix $D(G)$.

An eigenvector of $D(G_i)$
with eigenvalue $\L\ne \rho_i$
contributes an eigenvector of $D(G)$ with eigenvalue $\L(n_1n_2 \cdots n_k)/n_i=\L n/n_i$.
This eigenvalue has the same sign as $\L$ if $\L\ne 0$. Also, if $i\ne j$, 
then each of the $n_i-1$ eigenvectors contributed to $D(G)$ by $D(G_i)$ is orthogonal to 
each of the analogous $n_j-1$ eigenvectors contributed to $D(G)$ by $D(G_j)$.
Thus, the inequality (a) claimed for $n_-$ follows. Also,
by Lemma \ref{lem:eigvec}, $\vecone_n$ is an
eigenvector of $D(G)$ with a positive eigenvalue $\rho$,  so the
inequality (b) for $n_+$ follows.
\end{proof}

\begin{remark} \label{rem:countereg} {\rm (Observed by D.A. Gregory) The inequality in Lemma \ref{cor:supaddinertia}
need not hold if the regularity assumption is dropped.
For example, suppose $G=G_1\Box G_1$ where $G_1$ is the graph on $6$ vertices obtained $K_{2,4}$
by inserted an edge incident to the two vertices in the part of size $2$. 
Then $n_-(D(G_1))=5$ but $n_-(D(G))=9<5+5$.
Also, $N(G_1)=5$, so  $9\le N(G)\le 10$ by (\ref{Witsenhausen_ineq}) and (\ref{in:subaddN}). 
An affirmative answer to Question~\ref{Qequality} would imply $N(G)=10$.
}\end{remark}

If each $D(G_i)$ in (\ref{eq:diamondprodD}) is regular, then Lemma \ref{cor:supaddinertia}
accounts for $1+\sum_i(\rank D(G_i)-1)= k-1+ \sum_i \rank D(G_i)$ of the $\rank D(G)$
nonzero eigenvalues of $D(G)$.
The following results imply that if each $D(G_i)$ is regular then all of the remaining eigenvalues must be equal to zero.  Equivalently, the results will imply that if each $D(G_i)$ in (\ref{eq:diamondprodD}) is regular, then
equality must hold in Lemma \ref{cor:supaddinertia}(a) and (b).

The next result (proved by D.A. Gregory) is obtained by exhibiting an orthogonal basis of $\R^{mn}$ consisting of eigenvectors of $A\di B$ when $A$ and $B$
are symmetric and regular.

\begin{theorem}\label{lem:eigvecsAdiB} 
 Let $A$ be a regular symmetric real $n\times n$ matrix with    $A\vecone_n=\rho_A \vecone_n$ with $\rho_A>0$
 and let $B$ be a regular symmetric matrix of order $m$ with $B\vecone_m = \rho_B \vecone_m$ with $\rho_B>0$. Then
 \begin{enumerate}
 \item[(a)] $n_-(A\di B) =  n_-(A)+n_-(B)$,
 \item[(b)] $n_+(A\di B) = n_+(A) + n_+(B) -1$, and
 \item[(c)] $n_o(A\di B) = nm -n -m + 1 + n_o(A) + n_o(B)$.
 \end{enumerate}
\end{theorem}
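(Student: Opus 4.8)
The plan is to exhibit, in one stroke, an orthogonal basis of $\R^{nm}$ consisting of eigenvectors of $A\di B$, using the identity $A\di B=A\otimes J_m+J_n\otimes B$ from (\ref{eqn:diamond}) together with the liftings already recorded in Lemmas~\ref{lem:eigvec} and~\ref{lem:Perron}. Since $A$ is symmetric with $A\vecone_n=\rho_A\vecone_n$, I can fix an orthogonal eigenbasis $\{\vecone_n,\vu_1,\dots,\vu_{n-1}\}$ of $\R^n$ with $A\vu_i=\alpha_i\vu_i$ and $\vu_i\t\vecone_n=0$; likewise fix $\{\vecone_m,\vw_1,\dots,\vw_{m-1}\}$ for $B$ with $B\vw_j=\beta_j\vw_j$ and $\vw_j\t\vecone_m=0$. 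The $nm$ Kronecker products formed by choosing one vector from each list are pairwise orthogonal, since $(\vx\otimes\vy)\t(\vz\otimes\vr)=(\vx\t\vz)(\vy\t\vr)$, and hence form an orthogonal basis of $\R^{nm}$.

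Next I would identify the eigenvalue attached to each of the four resulting types of vectors. The products $\vu_i\otimes\vecone_m=\vu_i\di\v0_m$ are eigenvectors with eigenvalue $m\alpha_i$ by Lemma~\ref{lem:eigvec}, and there are $n-1$ of them; the products $\vecone_n\otimes\vw_j=\v0_n\di\vw_j$ have eigenvalue $n\beta_j$ by the same lemma, and there are $m-1$ of them; the single vector $\vecone_n\otimes\vecone_m=\vecone_{nm}$ has eigenvalue $m\rho_A+n\rho_B$ by Lemma~\ref{lem:Perron}; and each of the $(n-1)(m-1)$ mixed products $\vu_i\otimes\vw_j$ is annihilated, since $J_m\vw_j=\v0$ and $J_n\vu_i=\v0$ give $(A\di B)(\vu_i\otimes\vw_j)=\v0$. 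The four families have sizes $n-1$, $m-1$, $1$, and $(n-1)(m-1)$, which sum to $nm$, confirming that the basis is complete.

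It then remains to count signs. Multiplication by the positive scalars $m$ and $n$ preserves sign, so the first family realizes precisely the eigenvalues of $A$ with one copy of $\rho_A$ deleted, the second realizes the eigenvalues of $B$ with one copy of $\rho_B$ deleted, the third contributes a single positive eigenvalue, and the fourth contributes $(n-1)(m-1)$ zeros. Summing the contributions gives $n_-(A\di B)=n_-(A)+n_-(B)$, $n_+(A\di B)=1+(n_+(A)-1)+(n_+(B)-1)=n_+(A)+n_+(B)-1$, and $n_o(A\di B)=(n-1)(m-1)+n_o(A)+n_o(B)=nm-n-m+1+n_o(A)+n_o(B)$, which are exactly (a)--(c).

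The one place that needs care --- and where the hypotheses $\rho_A>0$ and $\rho_B>0$ are used --- is the accounting of the two deleted eigenvalues. Because $A$ and $B$ are only assumed symmetric and regular (not nonnegative or irreducible), $\rho_A$ and $\rho_B$ need not be simple or largest, so I cannot appeal to Perron--Frobenius; nonetheless the copy of $\rho_A$ attached to $\vecone_n$ is exactly the one omitted from the first family, and $\rho_A>0$ is what guarantees that this omitted eigenvalue is counted in $n_+(A)$, so that the correction $-1$ lands on the positive tally rather than on $n_o$ or $n_-$. The same applies to $\rho_B$, and together with the strictly positive eigenvalue $m\rho_A+n\rho_B$ of $\vecone_{nm}$ this is precisely what produces the asymmetry between parts (a) and (b).
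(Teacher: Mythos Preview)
Your proof is correct and follows essentially the same approach as the paper: both exhibit an orthogonal eigenbasis of $\R^{nm}$ built from the tensor products of the eigenbases of $A$ and $B$, identify the eigenvalue on each of the four families $\{\vecone_n\otimes\vecone_m\}$, $\{\vu_i\otimes\vecone_m\}$, $\{\vecone_n\otimes\vw_j\}$, $\{\vu_i\otimes\vw_j\}$ via Lemmas~\ref{lem:eigvec} and~\ref{lem:Perron}, and read off the inertia. The only cosmetic difference is packaging --- the paper phrases the count as three lower bounds that sum to $nm$ (forcing equality), whereas you verify directly that the four families have sizes summing to $nm$ and then tally signs; your final paragraph on why $\rho_A,\rho_B>0$ is needed is a welcome clarification that the paper leaves implicit.
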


\begin{proof} As in Lemma \ref{cor:supaddinertia}, Lemma \ref{lem:eigvec}
can be used to provide eigenvectors that imply that $n_-(A\di B) \ge  n_-(A)+n_-(B)$ and
$n_+(A\di B) \ge n_+(A) + n_+(B) -1$. It remains to
exhibit an adequate number of linearly independent eigenvectors of $A\di B$ for the eigenvalue $0$.

If $\vecone_n\t x = 0$ and $\vecone_m\t y=0$, then
$$(A\di B )(x\otimes y) = (A\otimes J_m + J_n\otimes B)(x\otimes y)= Ax\otimes 0_m + 0_n\otimes By =0_{nm}$$
This accounts for at least $(n-1)(m-1)=nm-n-m+1$ orthogonal eigenvectors  of $A\di B$ with eigenvalue $0$.
Moreover, if $Au=0$ then $\vecone_n\t u =0$ and hence, by Lemma \ref{lem:eigvec}, $(A\di B)(u\di 0_m)=0_{nm}$.
Likewise, if $Bv=0$ then $\vecone_m\t v =0$ and by Lemma~\ref{lem:eigvec}, $(A\di B)(0_n\di v)=0_{nm}$. If each set of vectors $x$, each set of vectors $y$, each set of vectors $u$ and each set of vectors $v$ that occur above are chosen to be orthogonal, then the resulting vectors
$x\otimes y, u\otimes \vecone_m, \vecone_n\otimes v$ will be orthogonal.
Thus, $n_o(A\di B) \ge nm -n -m + 1 + n_o(A) + n_o(B)$.  Adding the three inequalities obtained above, we get
\begin{eqnarray*}
nm & = & n_-(A\di B) + n_+(A\di B) + n_o(A\di B) \\
   & \ge & n_-(A)+n_-(B) + n_+(A) + n_+(B) -1 + nm -n -m + 1 + n_o(A) + n_o(B)\\
   & = & nm.
\end{eqnarray*}
Thus equality holds in each of the three inequalities.
\end{proof}

\begin{corollary} \label{cor:addinertia} 
If  $G=G_1\Box G_2 \Box \cdots \Box G_k$ and each $D(G_i), i=1,\ldots,k$
is regular then
\begin{enumerate}
  \item[(a)] $n_-(D(G)) = \sum_i n_-(D(G_i))$, and
  \item[(b)] $n_+(D(G)) = 1+\sum_i (n_+(D(G_i)) -1)$.
\end{enumerate}
\end{corollary}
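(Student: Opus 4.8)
The plan is to prove both statements simultaneously by induction on the number of factors $k$, using Theorem~\ref{lem:eigvecsAdiB} as the engine for the inductive step. When $k=1$ the two assertions read $n_-(D(G_1)) = n_-(D(G_1))$ and $n_+(D(G_1)) = 1 + (n_+(D(G_1)) - 1)$, both trivially true, so the base case is immediate.

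For the inductive step I would set $G' = G_2 \Box \cdots \Box G_k$ and write $A = D(G_1)$, $B = D(G')$. The operation $\di$ is associative: from the identity $J_{nm} = J_n \otimes J_m$ together with the expression $A \di B = A \otimes J_m + J_n \otimes B$ in (\ref{eqn:diamond}), one checks that $(A \di B)\di C = A \di (B \di C)$, so the product formula (\ref{eq:diamondprodD}) can be regrouped as $D(G) = A \di B$. To invoke Theorem~\ref{lem:eigvecsAdiB} on this product I must verify its hypotheses for both factors: each is a distance matrix and hence symmetric; each is regular, namely $A$ by assumption and $B$ because Lemma~\ref{lem:Perron}, applied repeatedly, shows that the $\di$-product of regular matrices is again regular; and each has a strictly positive Perron value, since a connected graph has an irreducible nonnegative distance matrix whose largest eigenvalue is positive.

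With the hypotheses in place, Theorem~\ref{lem:eigvecsAdiB}(a),(b) give
\begin{eqnarray*}
n_-(D(G)) &=& n_-(A) + n_-(B), \\
n_+(D(G)) &=& n_+(A) + n_+(B) - 1.
\end{eqnarray*}
Substituting the induction hypothesis for $B = D(G')$, namely $n_-(B) = \sum_{i=2}^k n_-(D(G_i))$ and $n_+(B) = 1 + \sum_{i=2}^k (n_+(D(G_i)) - 1)$, collapses the right-hand sides to the claimed formulas (a) and (b) for $k$ factors. The one arithmetic point worth flagging is that the $-1$ appearing in Theorem~\ref{lem:eigvecsAdiB}(b) exactly cancels the leading $1$ in $n_+(B)$, which is what keeps the single global Perron eigenvector from being overcounted.

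The step I expect to require the most care is confirming that the intermediate factor $B = D(G')$ genuinely meets the hypotheses of Theorem~\ref{lem:eigvecsAdiB} — in particular that $D(G')$ equals the $\di$-product $D(G_2)\di\cdots\di D(G_k)$ (so that the induction hypothesis applies) and that it inherits both regularity and a strictly positive Perron value from its factors. Once the associativity of $\di$ and Lemma~\ref{lem:Perron} are in hand, everything else is bookkeeping.
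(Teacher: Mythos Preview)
Your proposal is correct and follows exactly the route the paper intends: the corollary is stated without proof immediately after Theorem~\ref{lem:eigvecsAdiB}, the preceding paragraph having announced that the theorem forces equality in Lemma~\ref{cor:supaddinertia}(a),(b). Your induction on $k$, with the checks that $D(G')$ remains symmetric, regular (via Lemma~\ref{lem:Perron}), and has positive Perron value, is precisely the unpacking of that implicit argument.
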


\begin{remark}\label{rem:nullbound}{\rm In the proof of Theorem \ref{lem:eigvecsAdiB}, whether or not $A$ and $B$
are symmetric and regular, we always have
$(A\di B )(x\otimes y)=0_{nm}$ whenever $\vecone_n\t x = 0$ and $\vecone_m\t y=0$.
Thus,
$$\Nul(A\di B) \ge (n-1)(m-1)$$
for all square matrices $A$ and $B$ of orders
$n$ and $m$, respectively.
}\end{remark}

In order to apply Lemma \ref{cor:supaddinertia} to the Cartesian product
(\ref{eq:cartprodG}), it would be helpful to have conditions on the graphs  $G_i$ that imply that the distance matrices $D(G_i)$ are regular.  The following remark gives a few examples
of graphs whose distance matrix has constant row sums.

\begin{remark} (Regular distance matrices)

{\rm
1. If $G$ is either distance regular or vertex transitive, then $D(G)$ is $\rho$-regular
where $\rho$ is equal to the sum of all the distances from a particular vertex
to each of the others.

2. If $G$ is a regular graph of order $n$ and the diameter of $G$ is either one or two,
then $D(G)$ is $\rho$-regular with $\rho=2(n-1)-\rho_A$ where $\rho_A$ is the
Perron value of the adjacency matrix $A$ of $G$.
For if $A$ is the adjacency matrix of $G$, then $D(G)= A+2(J_n-I_n-A)= 2(J_n-I_n)-A$.
This holds, for example, when  $G$ is the Petersen graph or $G=K_n$ (the complete graph on $n$ vertices) or when $G=K_{m,m}$ (the complete balanced bipartite graph on $n=2m$ vertices).
}\end{remark}

\begin{question}\label{Qconditions}{What are other conditions on a graph that imply that its distance matrix
 is regular?
} \end{question}

\begin{theorem} \label{lem:Kev} Let $G=G_1\Box G_2 \Box \cdots \Box G_k$.
If $D(G_i)$ is regular  and $N(G_i)=n_-(D(G_i))$ for $i=1,\ldots,k,$
then $N(G)=\sum_{i=1}^k N(G_i)$.
\end{theorem}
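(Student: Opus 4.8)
The plan is to establish the equality $N(G)=\sum_{i=1}^k N(G_i)$ by proving two matching inequalities, the upper bound coming from subadditivity and the lower bound coming from a Witsenhausen-type eigenvalue estimate that is now tight because of the hypothesis.

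First I would invoke the subadditivity inequality~(\ref{in:subaddN}), which gives immediately $N(G)\le\sum_{i=1}^k N(G_i)$ with no assumptions on the $G_i$ beyond their being the Cartesian factors. The entire content of the theorem therefore lies in the reverse inequality $N(G)\ge\sum_{i=1}^k N(G_i)$.

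For the lower bound, I would chain together the Witsenhausen bound~(\ref{Witsenhausen_ineq}), the inertia computation of Corollary~\ref{cor:addinertia}, and the hypothesis $N(G_i)=n_-(D(G_i))$. Concretely, since each $D(G_i)$ is regular by assumption, Corollary~\ref{cor:addinertia}(a) applies and yields $n_-(D(G))=\sum_{i=1}^k n_-(D(G_i))$. Substituting the hypothesis $n_-(D(G_i))=N(G_i)$ gives $n_-(D(G))=\sum_{i=1}^k N(G_i)$. Then Witsenhausen's inequality~(\ref{Witsenhausen_ineq}) in the form $N(G)\ge n_-(D(G))$ delivers $N(G)\ge\sum_{i=1}^k N(G_i)$. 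Combining with the upper bound from the previous paragraph forces equality throughout, so $N(G)=\sum_{i=1}^k N(G_i)$, which is the claim. As a byproduct this also shows the product $G$ is itself eigensharp in the $n_-$ direction, with $N(G)=n_-(D(G))$.

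Honestly, there is no serious obstacle here: once Corollary~\ref{cor:addinertia} is in hand, the argument is a one-line sandwich of the two inequalities. The only point requiring the slightest care is making sure Corollary~\ref{cor:addinertia} is genuinely applicable, i.e.\ that the regularity hypothesis on each $D(G_i)$ in the statement of the theorem is exactly the hypothesis under which that corollary was proved; this is the case, so no additional verification is needed. One could also remark that the hypothesis $N(G_i)=n_-(D(G_i))$ is precisely the assertion that each factor is eigensharp using its negative inertia, which is why the bound~(\ref{Witsenhausen_ineq}) becomes tight for the product.
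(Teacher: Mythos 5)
Your proof is correct and follows essentially the same route as the paper: the upper bound from subadditivity~(\ref{in:subaddN}) sandwiched against the lower bound $N(G)\ge n_-(D(G))\ge\sum_i n_-(D(G_i))=\sum_i N(G_i)$ from Witsenhausen's inequality. The only cosmetic difference is that you cite the equality of Corollary~\ref{cor:addinertia}(a) where the paper gets by with the weaker inequality of Lemma~\ref{cor:supaddinertia}(a); either suffices.
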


\begin{proof}
By the lower bound \eqref{Witsenhausen_ineq} and the subadditivity property \eqref{in:subaddN},  $\sum_i N(G_i)\ge N(G)\ge n_-D(G)$, where
by Lemma \ref{cor:supaddinertia}(a), $n_-(D(G))\ge \sum_i n_-(D(G_i))= \sum_i N(G_i)$.
\end{proof}

\begin{example}\label{EHamming}{\rm The Cartesian product of complete graphs, 
$G=K_{n_1}\Box K_{n_2} \Box \cdots \Box K_{n_k}$ is also known as
a Hamming graph.
By (\ref{GPn}) and Theorem~\ref{lem:Kev}, it follows that 
  $N(G)= \sum_{i=1}^k (n_i-1)$.
In the next section, we explore this result using a different description of the Hamming graphs.
}\end{example}

\section{Optimal Addressing of Hamming Graphs}\label{ham}

Let $n\geq 1$ and $q\geq 2$ be two integers. The vertices of the Hamming graph $H(n,q)$ can be
described as the words of length $n$ over the alphabet $\{1,\dots,q\}$. Two vertices $(x_1,\dots,x_n)$ and $(y_1,\dots,y_n)$ are adjacent ifand only if their Hamming distance is $1$. If $n=1$, $H(1,q)$ is the complete graph $K_q$. The following result, 
can be derived from Example~\ref{EHamming}, but we provide another interesting and constructive argument.
\begin{theorem}
If $n\geq 1$ and $q\geq 2$, then $N(H(n,q))=n(q-1)$.
\end{theorem}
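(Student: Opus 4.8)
The plan is to prove the theorem by establishing both inequalities $N(H(n,q)) \le n(q-1)$ and $N(H(n,q)) \ge n(q-1)$, but since the theorem promises a ``constructive argument'' distinct from the eigenvalue route of Example~\ref{EHamming}, I would aim to exhibit an explicit optimal addressing for the upper bound and supply a matching lower bound. Because $H(n,q) = K_q \Box K_q \Box \cdots \Box K_q$ ($n$ factors), the subadditivity inequality (\ref{in:subaddN}) together with $N(K_q)=q-1$ from (\ref{GPn}) already gives the upper bound $N(H(n,q)) \le n(q-1)$ for free, but I would make this concrete: first build an explicit optimal $(q-1)$-addressing of $K_q$, then use the concatenation principle described just after (\ref{eq:addpropdiam}) to produce an explicit $n(q-1)$-addressing of $H(n,q)$. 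The additive distance property (\ref{eq:addpropdiam}) guarantees that concatenating per-coordinate addresses yields the correct Hamming distances.

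For the explicit addressing of $K_q$, I would assign to each letter $x \in \{1,\dots,q\}$ a $(q-1)$-address that records its position in a fixed linear order. A natural choice is the ``staircase'' addressing: letter $j$ receives an address whose first $j-1$ entries are $a$ and whose remaining entries are $b$ (with suitable conventions at the endpoints so that any two distinct letters disagree in exactly one $\{a,b\}$-position). I would verify that for distinct letters $i < j$, exactly one coordinate has one address equal to $a$ and the other equal to $b$, matching $d_{K_q}(i,j)=1$, while equal letters contribute $0$. Concatenating these across the $n$ coordinates then assigns each word $(x_1,\dots,x_n)$ an $n(q-1)$-address, and by (\ref{eq:addpropdiam}) the number of disagreeing $(a,b)$-positions equals $\sum_i d_{K_q}(x_i,y_i) = d_H(\vx,\vy)$, the Hamming distance.

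For the lower bound, the cleanest route is the one already assembled in the excerpt: Theorem~\ref{lem:Kev} applies because each factor $K_q$ is distance regular (indeed vertex transitive), so each $D(K_q)$ is regular, and $N(K_q)=q-1=n_-(D(K_q))$ since $D(K_q)=J_q-I_q$ has $q-1$ negative eigenvalues (all equal to $-1$). Hence $N(H(n,q)) = \sum_{i=1}^n N(K_q) = n(q-1)$. Alternatively, to keep the argument self-contained via Witsenhausen's inequality (\ref{Witsenhausen_ineq}), I would compute $n_-(D(H(n,q)))$ directly: by Corollary~\ref{cor:addinertia}(a), $n_-(D(H(n,q))) = \sum_i n_-(D(K_q)) = n(q-1)$, and then $N(H(n,q)) \ge n(q-1)$ follows.

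\textbf{The main obstacle} I anticipate is purely bookkeeping rather than conceptual: pinning down an explicit $(q-1)$-address for each of the $q$ letters of $K_q$ so that every pair of distinct letters disagrees in \emph{exactly one} $(a,b)$-position, never more and never fewer, and handling the boundary letters correctly. The staircase scheme must be checked to avoid accidental double-disagreements, and I would verify the count $d_{K_q}(i,j)=1$ holds for all $\binom{q}{2}$ pairs before concatenating. Once the single-factor construction is pinned down, the product structure and the already-established inertia computation make both bounds routine.
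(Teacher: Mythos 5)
Your proof is correct in substance but follows a different route from the paper's, and one of your explicit constructions is broken as stated. For the lower bound, you invoke Theorem~\ref{lem:Kev} (equivalently Corollary~\ref{cor:addinertia}(a)) applied to $H(n,q)=K_q\Box\cdots\Box K_q$ with $n_-(D(K_q))=n_-(J_q-I_q)=q-1$; this is exactly the route of Example~\ref{EHamming}, which the paper acknowledges as valid but deliberately sets aside. The paper instead computes the full spectrum of $D(H(n,q))$ from scratch, summing $k\lambda_{k,x}$ over the Krawtchouk eigenvalues of the distance-$k$ adjacency matrices to find that the only eigenvalues are $nq^{n-1}(q-1)$ (once), $-q^{n-1}$ (with multiplicity $n(q-1)$), and $0$, then applies Witsenhausen's bound. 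Your route is shorter given the product machinery already developed; the paper's buys the exact distance spectrum of $H(n,q)$ as a byproduct and is self-contained within Section~\ref{ham}. For the upper bound, your fallback via subadditivity \eqref{in:subaddN} and $N(K_q)=q-1$ is airtight, and concatenating optimal $K_q$-addressings is morally identical to the paper's explicit biclique partition into the $B_{i,t}$.

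However, the specific ``staircase'' addressing you propose for $K_q$ --- letter $j$ gets $a$ in its first $j-1$ positions and $b$ in the rest --- does not work for $q\geq 3$: letters $i<j$ then disagree in the $j-i$ positions $i,\dots,j-1$ (where $i$ has $b$ and $j$ has $a$), so the computed distance is $j-i$ rather than $1$. This is not merely an endpoint issue, and no addressing of $K_q$ using only the symbols $a,b$ (no zeros) can work, since over $\{a,b\}^{q-1}$ the induced distance is an $\ell_1$-type metric that cannot make all $\binom{q}{2}$ pairs equidistant for $q\geq 3$. The fix is to use the symbol $0$: give letter $j$ the address with $b$ in positions $1,\dots,j-1$, $a$ in position $j$ (when $j\leq q-1$), and $0$ thereafter, with letter $q$ receiving all $b$'s. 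Then letters $i<j$ meet an $(a,b)$ pair only in position $i$. Concatenated over the $n$ coordinates, this is precisely the paper's biclique partition $B_{i,t}$ with classes $\{x:x_i=t\}$ and $\{x:x_i\geq t+1\}$, so with that correction your construction coincides with the paper's.
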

\begin{proof}
We first prove that the length of any addressing of $H(n,q)$ is at least $n(q-1)$. For $0\leq k\leq n$, let $A_k$ denote the distance $k$ adjacency matrix of $H(n,q)$. The adjacency matrix of the distance multigraph of $H(n,q)$ is $D(H(n,q))=\sum_{k=1}^{n}kA_k$. The graph $H(n,q)$ is distance-regular and therefore, $A_1,\dots, A_n$ are simultaneously diagonalizable. The eigenvalues of the matrices $A_1,\dots,A_n$ were determined by Delsarte in his thesis \cite{Delsarte} (see also \cite[Theorem 30.1]{vLW}).
\begin{proposition}
Let $k\in \{1,\dots,n\}$. The eigenvalues of $A_k$ are given by the Krawtchouk polynomials:
\begin{equation}
\lambda_{k,x}=\sum_{i=0}^{k}(-q)^i(q-1)^{k-i}{n-i\choose k-i}{x\choose i}
\end{equation}
with multiplicity ${n\choose x}(q-1)^x$ for $x\in \{0,1,\dots,n\}$.
\end{proposition}

The Perron value of $A_k$ equals ${n\choose k}(q-1)^{k}$. Thus, the Perron value of $D(H(n,q))$ equals $\sum_{k=1}^{n}{n\choose k}k(q-1)^k=nq^{n-1}(q-1)$ and has multiplicity one. The other eigenvalues of $D(H(n,q))$ are 
\begin{align*}
\mu_x&=\sum_{k=1}^{n}k\lambda_{k,x}=\sum_{k=1}^{n}k\sum_{i=0}^{k}(-q)^i(q-1)^{k-i}{n-i\choose k-i}{x\choose i}\\
&=\sum_{i=0}^{n}(-q)^i{x\choose i}\sum_{k=i}^{n}k(q-1)^{k-i}{n-i\choose k-i}=\sum_{i=0}^{n}(-q)^i{x\choose i}\sum_{t=0}^{n-i}(i+t)(q-1)^t{n-i\choose t}\\
&=\sum_{i=0}^{n}(-q)^i{x\choose i}\left(nq^{n-i}-(n-i)q^{n-i-1}\right)\\
&=q^{n-1}\sum_{i=0}^{n}{x\choose i}(-1)^{i}i=\begin{cases}-q^{n-1} & \text{ if } x=1\\
0 & \text{ if } x\geq 2.
\end{cases}
\end{align*}
with multiplicity ${n\choose x}(q-1)^x$ for $1\leq x\leq n$. Thus, the spectrum of $D(H(n,q))$, with multiplicities, is 
\begin{equation}
\begin{pmatrix}
nq^{n-1}(q-1) &  -q^{n-1} &  0\\
1 & n(q-1) & q^n-1-q(n-1)
\end{pmatrix}.
\end{equation}
where the first row contains the distinct eigenvalues of $D(H(n,q))$ and the second row contains their multiplicities. Thus, $\max(n_{-}(D(H(n,q))),n_{+}(D(H(n,q))))=n(q-1)$ and Witsenhausen's inequality \eqref{Witsenhausen_ineq} implies that $N(H(n,q))\geq n(q-1)$.

To show $n(q-1)$ is the optimal length of an addressing of $H(n,q)$, we describe a partition of the edge-set of the distance multigraph of $H(n,q)$ into exactly $n(q-1)$ bicliques. For $1\leq i\leq n$ and $1\leq t\leq q-1$, define the biclique $B_{i,t}$ whose color classes are 
$$
\{(x_1,\dots,x_n): x_i=t \}
$$
and 
$$
\{(x_1,\dots, x_n): x_i\geq t+1\}.
$$
One can check easily that if $u$ and $v$ are two distinct vertices in $H(n,q)$, there exactly $d_H(u,v)$ bicliques $B_{i,t}$ containing the edge $uv$. Thus, the $n(q-1)$ bicliques $B_{i,t}$ partition the edge set of the distance multigraph of $H(n,q)$ and $N(H(n,q))\leq n(q-1)$. This finishes our proof.
\end{proof}

We remark here that the spectrum of the distance matrix of $H(n,q)$ was also computed by Indulal \cite{I}.

\section{Triangular Graphs
}\label{triangleS}

The triangular graph $T_n$ is the line graph of the complete graph $K_n$ on $n$ vertices. When $n\geq 4$, the triangular graph $T_n$ is a strongly regular graph with parameters $\left({n\choose 2}, 2(n-2),n-2,4\right)$. The adjacency matrix of $T_n$ has spectrum
\begin{equation}
\begin{pmatrix}
2(n-2) &  n-4 &  -2\\
1 & n-1 & {n\choose 2}-n
\end{pmatrix}
\end{equation} 
and therefore, the distance matrix $D(T_n)$ has spectrum 
\begin{equation}
\begin{pmatrix}
(n-1)(n-2) &  2-n &  0\\
1 & n-1 & {n\choose 2}-n
\end{pmatrix}
\end{equation} 
Witsenhausen's inequality \eqref{Witsenhausen_ineq} implies that $N(T_n)=bp(D(T_n))\geq n-1$ for $n\geq 4$.

The problem of addressing $T_4$ is equivalent to determining the biclique partition number of the multigraph obtained from $K_6$ by adding one perfect matching. This formulation of the problem was studied by Zaks \cite{Zaks} and Hoffman \cite{H} (see also 
Section~\ref{K222}). Zaks proved that $N(T_4)=4$ and hence $T_4$ is not eigensharp. 
We will reprove the lower bound of Zaks~\cite{Zaks} in Lemma~\ref{t4notegnsharp} using a technique of \cite{EGV}. 
The argument of Lemma~\ref{t4notegnsharp} will then be 
used to show that $T_n$ is not eigensharp for any $n\geq 4$ in Theorem~\ref{tnnotegnsharp}.

The \emph{addressing matrix} of a $t$-addressing is the $n$-by-$t$ 
matrix $M(a,b)$ where the $i$-th row of $M(a,b)$ is the address of vertex $i$. $M(a,b)$ can be written as a function of $a$ and $b$:
\[ M(a,b) = aX + bY, \] where $X$ and $Y$ are matrices with entries in $\{0,1\}$. Elzinga et al.~\cite{EGV}  use the addressing matrix, along with results from Brandenburg et al. \cite{BGK} and Gregory et al. \cite{GSW}, to create the following theorem:

\begin{theorem}\cite{EGV}\label{egnsharp}
Let $M(a,b)$ be the address matrix of an eigensharp addressing of a graph $G$. Then for all real scalars $a,b$, each column of $M(a,b)$ is orthogonal to the null space of $D(G)$. Also, the columns of $M(1,0)$ are linearly independent, as are the columns of $M(0,1)$.
\end{theorem}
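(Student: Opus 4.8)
The plan is to prove this theorem, which concerns the structure of an eigensharp addressing matrix $M(a,b) = aX + bY$. First I would recall that by definition, an eigensharp addressing achieves equality in Witsenhausen's inequality \eqref{Witsenhausen_ineq}, meaning the number of bicliques in the corresponding biclique partition of $\D(G)$ equals $\max(n_+(D(G)), n_-(D(G)))$. The key translation is that each column of the addressing matrix corresponds to a single biclique in the partition of $\D(G)$: in column $j$, the vertices receiving symbol $a$ form one color class of the biclique and those receiving $b$ form the other, while the entries of $X$ and $Y$ are precisely the indicator vectors of these two color classes. Thus the matrix $XY\t + YX\t$ (or a suitable signed combination) should recover the distance matrix $D(G)$, since each edge $uv$ of multiplicity $d(u,v)$ is covered exactly $d(u,v)$ times across the bicliques.

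The central algebraic step I would carry out is to write $D(G)$ as a sum over columns. For a single biclique with color-class indicator vectors $\vx_j$ and $\vy_j$ (the $j$-th columns of $X$ and $Y$), the contribution to the distance multigraph is the adjacency matrix $\vx_j \vy_j\t + \vy_j \vx_j\t$ of that complete bipartite graph. Summing over all $t = \max(n_+, n_-)$ bicliques yields $D(G) = X Y\t + Y X\t$. This is a rank factorization of a specific form. The crux is that when $t = \max(n_+(D(G)), n_-(D(G)))$ is as small as possible, the factorization is forced to be essentially rigid. I would invoke the cited results from Brandenburg et al.~\cite{BGK} and Gregory et al.~\cite{GSW}, which govern how minimal biclique partitions relate to matrix rank and inertia; these results should supply exactly the tightness needed to conclude both the orthogonality and the linear independence assertions.

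For the orthogonality claim, I would argue as follows. If $\vz$ lies in the null space of $D(G)$, then $\vz\t D(G) \vz = 0$, and since $D(G) = XY\t + YX\t$ we get $2(\vz\t X)(Y\t \vz) = \vz\t(XY\t + YX\t)\vz$. More usefully, from $D(G)\vz = 0$ we obtain $X(Y\t\vz) + Y(X\t\vz) = \v0$. The eigensharp condition forces the columns of $X$ and $Y$ together to span a space of dimension exactly matching the rank structure of $D(G)$, so this relation can hold only if $Y\t\vz = \v0$ and $X\t\vz = \v0$ separately; writing $M(a,b)\t\vz = a X\t\vz + b Y\t\vz$, this gives orthogonality of every column of $M(a,b)$ to $\vz$ for all scalars $a,b$. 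The linear independence of the columns of $M(1,0) = X$ and of $M(0,1) = Y$ follows because if the columns of $X$ (say) were dependent, one could merge or eliminate a biclique and obtain a shorter partition, contradicting minimality of $t = N(G)$ in the eigensharp case.

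The main obstacle I anticipate is making the rigidity argument precise: passing from the factorization $D(G) = XY\t + YX\t$ with $t$ columns to the conclusion that this number of columns being minimal forces $X\t\vz = Y\t\vz = \v0$ for null vectors $\vz$. This requires carefully reconciling the inertia bound (which controls $\max(n_+, n_-)$) with the column count, and this is exactly where the external results of \cite{BGK} and \cite{GSW} must do the heavy lifting rather than a self-contained computation. I would therefore structure the proof to isolate this tightness lemma, apply the cited machinery as a black box to obtain the dimension count, and then derive orthogonality and independence as relatively short consequences.
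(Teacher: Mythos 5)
The paper does not actually prove this statement --- it is quoted verbatim from \cite{EGV}, with the remark that the proof there rests on \cite{BGK} and \cite{GSW} --- so your proposal has to be measured against that source. Your framework is the right one: each column of the addressing matrix is a biclique of the partition of $\mathcal{D}(G)$, the indicator matrices satisfy $D(G)=XY^\top+YX^\top$, and eigensharpness says the number of columns $t$ equals $\max(n_+,n_-)$, which by \cite{GSW} is the \emph{Hermitian rank} of $D(G)$, i.e.\ the minimum number of terms in any real decomposition $\sum_j (x_jy_j^\top+y_jx_j^\top)$. Deferring the rigidity of a minimal such decomposition to \cite{GSW} is legitimate and is essentially what \cite{EGV} does.

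However, the specific mechanism you offer for the orthogonality step is wrong. From $D(G)\vz=0$ you get $X(Y^\top \vz)+Y(X^\top \vz)=0$, and you claim a dimension/spanning count forces $X^\top \vz=Y^\top \vz=0$; that would require the $2t$ columns of $[X\ Y]$ to be linearly independent, which fails in general (already for $K_n$ with the standard addressing, $[X\ Y]$ is $n\times(2n-2)$). The correct argument is a reduction: if $X^\top \vz=u\neq 0$, apply an orthogonal change of coordinates in $\R^t$ (which preserves $XY^\top+YX^\top$) so that $u=ce_1$; since $u^\top(Y^\top \vz)=\tfrac12 \vz^\top D\vz=0$ the first entry of $v=Y^\top \vz$ vanishes, and $D\vz=Xv+cy_1=0$ expresses $y_1=-\tfrac1c\sum_{j\ge2}v_jx_j$. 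Substituting and regrouping gives $D=\sum_{j\ge2}\bigl(x_j\tilde y_j^\top+\tilde y_jx_j^\top\bigr)$ with $\tilde y_j=y_j-\tfrac{v_j}{c}x_1$, a decomposition with $t-1$ terms, contradicting $t=\max(n_+,n_-)\le h(D)$. The linear-independence claim is handled the same way: a dependence among the columns of $X$ lets you absorb one term into the others, again producing a $(t-1)$-term \emph{real} decomposition. Note this is an algebraic shortening, not a ``merge of bicliques'': the new factors are no longer $0$--$1$ vectors, so the contradiction is with the Hermitian-rank lower bound $\max(n_+,n_-)$, not directly with the combinatorial minimality of $N(G)$. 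Without this reduction argument (or an explicit statement of the corresponding lemma from \cite{GSW}), the crux of your proof is unsupported.
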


 In~\cite[Theorem 3]{EGV}, Elzinga et al. use Theorem~\ref{egnsharp} to show that the Petersen graph does not have an eigensharp addressing. We will use a similar approach on triangular graphs. 

\begin{lemma}\label{t4notegnsharp}
The triangular graph $T_4$ is not eigensharp, that is, $N(T_4) \geq 4$.
\end{lemma}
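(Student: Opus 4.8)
The plan is to argue by contradiction using Theorem~\ref{egnsharp}. From the spectrum of $D(T_4)$ recorded above (eigenvalues $6$, $-2$, $0$ with multiplicities $1$, $3$, $2$) we have $n_+(D(T_4))=1$, $n_-(D(T_4))=3$ and $n_0(D(T_4))=2$, so Witsenhausen's bound gives $N(T_4)\ge 3$ and an eigensharp addressing would have length exactly $t=3$. Suppose such an addressing exists and write its addressing matrix as $M(a,b)=aX+bY$, where $X,Y$ are $6\times 3$ matrices with entries in $\{0,1\}$ and disjoint support (no entry of an address is both $a$ and $b$). I would first record that $T_4=L(K_4)$ is the octahedron $K_{2,2,2}$, whose three missing edges determine three antipodal pairs $P_1,P_2,P_3$ of vertices (each pair at distance $2$, every cross-pair distance $1$). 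Since $D(T_4)=2(J_6-I_6)-A(T_4)$ on the diameter-two graph $T_4$, the null space of $D(T_4)$ is the $(-2)$-eigenspace of $A(T_4)$, spanned by two vectors of the form ``(sum over one pair) minus (sum over another pair)''; consequently a vector $w\in\R^6$ is orthogonal to $\Nul(D(T_4))$ precisely when its three antipodal pair-sums are all equal.

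Next I would extract the structure forced on $X$ and $Y$. By Theorem~\ref{egnsharp}, taking $(a,b)=(1,0)$ and $(a,b)=(0,1)$, every column of $X$ and every column of $Y$ is orthogonal to $\Nul(D(T_4))$, hence each column has all three antipodal pair-sums equal to a common value $c\in\{0,1,2\}$. A column with $c=0$ is the zero column and is excluded because the columns of $X$ (and of $Y$) are linearly independent by Theorem~\ref{egnsharp}. A column of $X$ with $c=2$ is the all-ones column; disjointness of support then forces the corresponding column of $Y$ to be zero, again contradicting linear independence (and symmetrically for $Y$). Thus every column of $X$ and of $Y$ has $c=1$, i.e.\ exactly one $1$ in each antipodal pair, and so column sum $3$. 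Counting entries, $X$ and $Y$ each contain $9$ ones, their supports are disjoint, and the matrices have $18$ entries in total, so $X+Y=J_{6\times 3}$; in particular every position of every address is either $a$ or $b$, never $0$.

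The contradiction then comes from the antipodal pairs, and this is the step I expect to be the crux. Fix $u,v$ in a common pair $P_\ell$, so $d_{T_4}(u,v)=2$. In each of the three positions the ``$c=1$'' condition puts a single $1$ of $X$ inside $P_\ell$, say $X_{uk}=1$, $X_{vk}=0$; since $X+Y=J_{6\times 3}$ this forces $Y_{vk}=1$, so at position $k$ one of $u,v$ carries $a$ and the other carries $b$. This happens at all three positions, so the addressing reports distance $3$ between $u$ and $v$, contradicting $d_{T_4}(u,v)=2$. Therefore no eigensharp (length-$3$) addressing of $T_4$ exists, giving $N(T_4)\ge 4$. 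The main obstacle is organizing the orthogonality and linear-independence conclusions of Theorem~\ref{egnsharp} into the rigid conclusion $X+Y=J_{6\times 3}$; once that is in hand, the antipodal over-counting is immediate.
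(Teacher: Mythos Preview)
Your proof is correct and uses the same key tool as the paper (Theorem~\ref{egnsharp}) but is organized somewhat differently and, in places, more cleanly. The paper argues by choosing particular null vectors of $D(T_4)$ (the $\pm1$ labelling of a $4$-cycle with zeros on the remaining antipodal pair) and running a short case analysis to show that each column of $M(a,b)$ must contain at least three $a$'s and at least three $b$'s; it then finishes with a global edge count, since $3$ columns each contributing at least $3\cdot 3=9$ $a$--$b$ pairs would account for at least $27$ edges of $\mathcal{D}(T_4)$, which has only $18$. You instead identify $\Nul(D(T_4))$ intrinsically as the set of vectors whose three antipodal pair-sums coincide, which lets you bypass the case analysis and immediately pin the common pair-sum of each column of $X$ and $Y$ to $1$; from this you deduce $X+Y=J_{6\times 3}$ and obtain a \emph{local} contradiction by reading off distance $3$ between an antipodal pair. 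Both routes are short; yours gives a tidier description of the null space, while the paper's edge-count contradiction is the form that gets recycled verbatim in the proof of Theorem~\ref{tnnotegnsharp} for general $T_n$.
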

\begin{proof} Suppose $T_4$ is eigensharp. Let $D=D(T_4)$. By Theorem \ref{egnsharp}, all vectors in the null space of $D$ are orthogonal to the columns of a $6 \times 3$ addressing matrix $M(a,b)$.

We can construct null vectors of $D$ in the following manner, referring to the entries of the null vector as 
\emph{labels}: choose any two non-adjacent vertices, and label them with zeroes. The remaining four vertices form a $4$-cycle, which will be alternatingly labelled with $1$ and $-1$, as in Figure~\ref{FigT4}.
\begin{figure}[ht]
\[
\begin{tikzpicture}[line join=bevel,z=-5.5]
\coordinate (A1) at (0,0,-1.25);  
\coordinate (A2) at (-1.25,0,0);
\coordinate (A3) at (0,0,1.25);
\coordinate (A4) at (1.25,0,0);
\coordinate (B1) at (0,1.25,0);
\coordinate (C1) at (0,-1.25,0);

\draw  (A1) -- (A2) -- (B1) -- cycle;
\draw  (A4) -- (A1) -- (B1) -- cycle;
\draw  (A1) -- (A2) -- (C1) -- cycle;
\draw  (A4) -- (A1) -- (C1) -- cycle;
\draw  (A2) -- (A3) -- (B1) -- cycle;
\draw  (A3) -- (A4) -- (B1) -- cycle;
\draw  (A2) -- (A3) -- (C1) -- cycle;
\draw  (A3) -- (A4) -- (C1) -- cycle;

\node at (A1)[place]{};
\node at (A2)[place]{};
\node at (A3)[place]{};
\node at (A4)[place]{};
\node at (B1)[place]{};
\node at (C1)[place]{};

\node [right] at (-0.05,0.1,-1.65) {$1$};
\node [left] at (A2) {$-1$};
\node [left] at (0.05,-0.1,1.65) {$1$};
\node [right] at (A4) {$-1$};
\node [above] at (B1) {$0$};
\node [below] at (C1) {$0$};
\end{tikzpicture}
\]
\caption{$T_4$ with a $D(T_4)$ null-vector labelling}\label{FigT4} 
\end{figure}
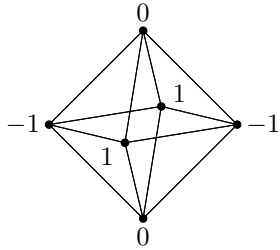

Let $w(a,b)$ be any column of $M(a,b)$. 
We claim that $w(a,b)$ has at least three $a$-entries, and at least three $b$-entries. 
For convenience, we'll refer to vertices corresponding to the $a$-entries of $w(a,b)$ as \emph{$a$-vertices}.
If there are no $a$-vertices, then since $M(a,b) = aX + bY$, 
one of the columns of $X$ is the zero vector. It would follow that the columns of $M(1,0)$ are linearly dependent, 
contradicting Theorem \ref{egnsharp}. 
Thus $w(a,b)$ has at least one $a$ and at least 
one $b$ entry. 

Suppose $w(a,b)$ has at most 2 $a$-entries.
There are three cases we will consider: 
there are two adjacent $a$-vertices, there are two non-adjacent $a$-vertices, or there is
exactly one $a$-vertex. 
In each case, we will construct a null vector $x$ of $D$ which is not orthogonal to $u=w(1,0)$, contradicting
Theorem~\ref{egnsharp}. We will use the labelling in Figure~\ref{FigT4}.

Suppose $w(a,b)$ has two adjacent $a$-vertices. 
Label one of the $a$-vertices with a zero and the adjacent $a$-vertex with $1$. 
Then $x^Tu =1 \neq 0$. 
Suppose $w(a,b)$ has two non-adjacent $a$-vertices. Label the two $a$-vertices
with $1$. Then $x^T u =2 \neq 0$.
Suppose there is only one $a$-vertex in $w$. Label the $a$-vertex with $1$.
Then $x^T u =1 \neq 0$.

Therefore, at least three positions of $w(a,b)$ have the value $a$.
Similarly, at least three positions of $w(a,b)$ must have value $b$.

Since each column of $M(a,b)$ has at least three $a$ and three $b$-entries, there are at least nine $a,b$ pairings
corresponding to each column. Since $M(a,b)$ has three columns, there 
are $27$ $a,b$ column-wise pairs in total. However the number of column-wise $a,b$ pairs in the addressing matrix $M(a,b)$
is simply the number of edges in $\D(T_4)$, namely $18$. This contradiction
implies that $T_4$ is not eigensharp.
\end{proof}

\begin{theorem}\label{tnnotegnsharp}
The triangular graph $T_n$ is not eigensharp for any $n \geq 4$, that is, $N(T_n) \geq n$ for all $n \geq 4$.
\end{theorem}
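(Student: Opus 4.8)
The plan is to assume, for contradiction, that $T_n$ has an eigensharp addressing and to run the argument of Lemma~\ref{t4notegnsharp}, but to replace the crude bound used there (``at least three $a$'s and three $b$'s per column'') by an \emph{exact} description of the columns that can occur. Since the displayed spectrum of $D(T_n)$ has $n_+=1$ and $n_-=n-1$, an eigensharp addressing has length $t=\max(n_+,n_-)=n-1$, so the addressing matrix $M(a,b)=aX+bY$ has exactly $n-1$ columns. Each column of $X$ is the $0/1$ indicator $u$ of the $a$-vertices and each column of $Y$ the indicator $v$ of the $b$-vertices; by Theorem~\ref{egnsharp} both $u$ and $v$ are orthogonal to $\Nul D(T_n)$, and each is nonzero since the columns of $X$ and of $Y$ are linearly independent.

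First I would identify $\Nul D(T_n)$ with the null space of the vertex--edge incidence matrix $B$ of $K_n$, whose columns are indexed by the vertices of $T_n$ (the edges of $K_n$). Using $D(T_n)=2(J-I)-A$ together with the line-graph identity $A=B^\top B-2I$, one finds $D(T_n)=-B^\top B$ on the orthogonal complement of $\vecone$, whence $\Nul D(T_n)=\Nul B$. Therefore the orthogonal complement of $\Nul D(T_n)$ is the row space of $B$, so a $0/1$ vector $u$ is orthogonal to $\Nul D(T_n)$ if and only if there exist reals $z_1,\dots,z_n$ with $u_{\{i,j\}}=z_i+z_j$ for every edge $\{i,j\}$ of $K_n$.

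The heart of the argument, and the step I expect to be the main obstacle, is the classification of all such $u$. The key observation is that $z$ takes at most two distinct values: three distinct values, realized at three vertices, would give three distinct pairwise sums $z_i+z_j$, impossible since each must lie in the two-element set $\{0,1\}$. A short analysis of the one- and two-value cases then shows the only admissible $u$ are $\v0$, the all-ones vector, a \emph{star} $S_i$ (the $n-1$ edges meeting a fixed vertex $i$), and a \emph{co-star} $C_i$ (the $\binom{n-1}{2}$ edges avoiding a fixed vertex $i$). I would discard $\v0$ (columns are nonzero) and the all-ones vector (it forces the partner indicator to be $\v0$). Since $u$ and $v$ have disjoint supports, for $n\ge 4$ one checks that $\{S_i,S_j\}$ share the edge $\{i,j\}$, that $\{C_i,C_j\}$ share all $\binom{n-2}{2}>0$ edges missing $\{i,j\}$, and that $\{S_i,C_j\}$ overlap unless $j=i$; hence every column must be the complementary pair $\{S_i,C_i\}$ at a single vertex $i$, with $s=n-1$ entries equal to one symbol and $r=\binom{n-1}{2}$ equal to the other.

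Finally I would count $ab$-pairings exactly as in Lemma~\ref{t4notegnsharp}. Each column now contributes precisely $sr=(n-1)\binom{n-1}{2}$ such pairs, so the $n-1$ columns contribute $\frac{(n-1)^3(n-2)}{2}$ in total, whereas the number of edges of $\D(T_n)$, namely the sum $\sum_{\{x,y\}}d(x,y)$ of all pairwise distances, is $\binom{n}{2}\cdot\frac{(n-1)(n-2)}{2}=\frac{n(n-1)^2(n-2)}{4}$. The ratio of these is $\frac{2(n-1)}{n}>1$ for every $n\ge 4$, so the columns would account for strictly more $ab$-pairings than $\D(T_n)$ has edges, a contradiction. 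This forces $N(T_n)>n-1$, i.e. $N(T_n)\ge n$. The only genuine work is the structural classification of the third paragraph; the incidence-matrix reduction and the closing count are routine.
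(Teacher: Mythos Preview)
Your argument is correct and takes a genuinely different, more structural route than the paper's own proof. The paper proves Theorem~\ref{tnnotegnsharp} by reducing to the base case $T_4$: it fixes a copy of $T_4$ inside $T_n$, observes that the $4$-cycle null vectors of $D(T_4)$ extend by zeros to null vectors of $D(T_n)$, and then reruns the counting argument of Lemma~\ref{t4notegnsharp} on those six coordinates. The contradiction is the same $36>18$ overcount, now obtained for the restricted columns rather than globally.

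Your approach instead identifies $\Nul D(T_n)$ with $\Nul B$ via $D(T_n)=2J-B^\top B$ and then classifies \emph{all} $0/1$ vectors in the row space of the incidence matrix $B$ of $K_n$, concluding that each column of a putative eigensharp addressing must be a complementary star/co-star pair $\{S_i,C_i\}$. This yields an exact per-column contribution of $(n-1)\binom{n-1}{2}$ $ab$-pairings and the global overcount $\frac{(n-1)^3(n-2)}{2}>\frac{n(n-1)^2(n-2)}{4}$ for all $n\ge4$. What you gain is a complete structural description of the only bicliques an eigensharp addressing of $T_n$ could use, information the paper's local $T_4$ reduction does not provide; this could in principle be leveraged toward sharper lower bounds. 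What the paper's approach buys is portability: it requires only that a small hard instance ($T_4$) sits as an induced subgraph and that its null vectors extend, a template that might transfer to other families without a clean incidence-matrix description.
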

\begin{proof}
Note that $T_4$ is an induced subgraph of $T_n$ since $K_4$ is an induced subgraph of $K_n$. Let $T$ be an induced subgraph of $T_n$ isomorphic to $T_4$.

Suppose $T_n$ is eigensharp. Let $M(a,b)$ be an eigensharp addressing matrix of $T_n$. By Theorem \ref{egnsharp}, the columns of $M(a,b)$ are orthogonal to any null vector of $D(T_n)$. Let $w$ be one of the columns of $M(a,b)$. We can construct 
a null vector $y$ of $D(T_n)$ by labelling the vertices corresponding to $T$ as described 
in Figure~\ref{FigT4} 
and labelling the remaining vertices of $T_n$ with zeroes. 
In \cite{EGV}, it is described that the columns of an addressing matrix correspond to bicliques that partition
the edgeset of $\D(T_n)$. Every biclique decomposition of $\D(T_n)$ induces a decomposition of $\D(T)$, an induced subgraph of $\D(T_n)$. Lemma \ref{t4notegnsharp} tells us that at least $4$ bicliques are needed to decompose $\D(T)$. Therefore, there must be at least four columns of $M(a,b)$ whose $6$ entries corresponding to $T$ have at least one $a$ and one $b$. The proof of Lemma \ref{t4notegnsharp} guarantees that each of these $4$ vectors, restricted to
the vertices of $T$, has at least three $a$ entries and three $b$ entries. Since $\D(T)$ is an induced subgraph of $\D(T_n)$, there are the same number of edges between the corresponding vertices in the two graphs. However, a contradiction occurs: the eigensharp addressing implies that there are at least 36 edges in $\D(T)$, 
but there are in fact 18. Therefore $T_n$ is not eigensharp.
\end{proof}
 
 For the triangular graph $T_5$ (the complement of the Petersen graph), the following six bicliques partition the edge set of $\mathcal{D}(T_5)$:
\begin{align*}
\{12,13,14,15\} &\cup \{23,24,25,34,35,45\}\\
 \{12,25\} &\cup \{13,14,34,35,45\}\\
 \{23,24\} &\cup \{15,25,34,35,45\}\\
 \{13,23,35\} &\cup \{14,24,45\}\\
 \{15\} &\cup \{12,13,14,34\}\\
 \{34\} &\cup \{25,35,45\}.
\end{align*}
Thus, by Theorem~\ref{tnnotegnsharp}, we know that $5\leq N(T_5)\leq 6$.

\section{Complete multipartite graphs $K_{2,\dots,2}$}\label{K222}

We note here that finding an optimal addressing of  the complete multipartite graph $K_{2,\dots,2}$ with $m$ color classes of size $2$ is a highly non-trivial open problem. It is equivalent to finding the biclique partition number of the multigraph obtained from the complete graph $K_{2m}$ by adding a perfect matching. Motivated by questions in geometry involving nearly-neighborly families of tetrahedra,  this problem  was studied by Zaks \cite{Zaks} and Hoffman \cite{H}. The best current results for $N(K_{2,\dots,2})=bp(\mathcal{D}(K_{2,\dots,2}))$ are due to these authors (the lower bound is due to Hoffman \cite{H} and the upper bound is due to Zaks \cite{Zaks}):
\begin{equation}
m+\lfloor \sqrt{2m}\rfloor-1\leq N(K_{2,\dots,2})\leq \begin{cases}
3m/2-1 & \text{ if } m \text{ is even}\\
(3m-1)/2 & \text{ if } m \text{ is odd}.
\end{cases}
\end{equation}

\section{Open Problems}

We conclude this paper with some open problems.

\begin{enumerate}

\item Must equality hold in (\ref{in:subaddN})
for all choices of $G_i$ ?

\item It is known that determining $bp(G)$ for a graph $G$ is an NP-hard problem (see \cite{KRW}). This problem is NP-hard even when restricted to graphs $G$ with maximum degree $\Delta(G)\leq 3$ (see \cite{C1}). What is the complexity of finding $N(G)$ for general graphs $G$ ? How about graphs with $\Delta(G)> 3$, or other families of graphs ?

\item What is $N(T_n)$ for $n\geq5$ ?

\item The triangular graph $T_n$ is a special case of a Johnson graph. For $n\geq m\geq 2$, the Johnson graph $J(n,m)$ has as its vertex set the $m$-subsets of $[n]$ with two $m$-subsets being adjacent if and only if their intersection has size $m-1$. The Johnson graph is distance-regular and its eigenvalues were determined by Delsarte in his thesis \cite{Delsarte} (see also \cite[Theorem 30.1]{vLW}). Atik and Panigrahi \cite{AP} computed the spectrum of the distance matrix $D(J(n,m))$:
\begin{equation}
    \begin{pmatrix}
    s & 0 & -\frac{s}{n-1}\\
    1 & {n\choose m}-n & n-1
    \end{pmatrix}
\end{equation}
where $s=\sum_{j=1}^{m}j{m\choose j}{n-m\choose j}$. 
Inequality \eqref{Witsenhausen_ineq} implies that $N(J(n,m))\geq n-1$. What is $N(J(n,m))$ ? 

\item The Clebsch graph is the strongly regular graph with parameters $(16,5,0,2)$ that is obtained from the $5$-dimensional cube by identifying antipodal vertices. The eigenvalue bound gives $N\geq 11$ and the connection with the $5$-dimensional cube might be useful to find a good biclique decomposition of the distance multigraph of this graph.

\item What is $N(G)$ if $G$ is a random graph ? Winkler's work \cite{Winkler}, Witsenhausen inequality \ref{Witsenhausen_ineq} and the Wigner semicircle law imply that $n-1\geq N(G)\geq n/2-c\sqrt{n}$ for some positive constant $c$. Recently, Chung and Peng \cite{CP} (see also \cite{A, ABH}) have shown for a random graph $G\in \mathcal{G}_{n,p}$ with $p\leq 1/2$ and $p=\Omega(1)$, almost surely
\begin{equation}
n-o((\log_b n)^{3+\epsilon})\leq \bp(G)\leq n-2\log_b n
\end{equation}
for $b=1/p$ and any positive constant $\epsilon$. Here $\mathcal{G}_{n,p}$ is the Erd\H{o}s-R\'{e}nyi random graph model.

\end{enumerate}

\noindent
\textbf{Acknowledgement.} Some of the initial threads of this project, in particular Section~\ref{CP} and \ref{CPD},
started in conversation with the late D.A. Gregory. We are grateful for his discussion and his leadership over the years.

\end{document}